\titleformat{\subsection}[runin]{\normalfont\bfseries}{\thesubsection.}{.5em}{}[.~ ]
\titlespacing{\subsection}{0pt}{1.5ex plus .1ex minus .2ex}{0pt}
\theoremstyle{plain}
\newtheorem{thm}[equation]{Theorem}
\newtheorem{lemma}[equation]{Lemma}
\theoremstyle{definition}
\newtheorem{rmk}[equation]{Remark}
\newcommand{\pp}{{\pmb{p}}}
\newcommand{\xx}{{\pmb{x}}}
\newcommand{\yy}{{\pmb{y}}}
\newcommand{\ff}{{\pmb{f}}}
\newcommand{\cc}{\pmb{c}}
\newcommand{\ta}{\mathrm{\mathrm{ta}}}
\newcommand{\PU}{\mathrm{\mathrm{PU}}}
\newcommand{\re}{\mathrm{\mathrm{Re}\,}}
\newcommand{\im}{\mathrm{\mathrm{Im}\,}}
\newcommand{\PP}{\mathbb{P}}
\newcommand{\HH}{\mathbb{H}}
\newcommand{\CC}{\mathbb{C}}
\newcommand{\ZZ}{\mathbb{Z}}
\newcommand{\KK}{\mathbb{K}}
\newcommand{\SP}{\mathbb{S}}
\newcommand{\RR}{\mathbb{R}}
\newcommand{\DD}{\mathbb{D}}
\title{\vspace{-15mm}\fontsize{16pt}{10pt}\selectfont\textbf{On the interplay between discrete invariants of complex hyperbolic disc bundles over surfaces}}
\author{
	\large
	\textsc{Hugo Cattarucci Botós}\thanks{Supported by  Max-Planck-Gesellschaft (MPG).}\\
	\normalsize \href{}{hugocbotos@gmail.com}\\
	\normalsize{Max Planck Institute for Mathematics, Bonn, Germany}	
	\vspace{-5mm}
}
\date{}
\begin{document}
	
\maketitle

    \begin{center}
	   \large\textbf{Abstract}
    \end{center}
    We investigate the relationship between three natural invariants of complex hyperbolic disc orbibundles over oriented and closed hyperbolic $2$-orbifolds. These invariants are the Euler characteristic $\chi$ of the $2$-orbifold, the Euler number $e$ of the disc orbibundle, and the Toledo invariant $\tau$ of a faithful representation of the surface group into $\PU(2,1)$ attached to the complex hyperbolic structure of the disc orbibundle. Based on previous examples, we conjecture that $-3|\tau| = 2e+2\chi$ always holds. For complex hyperbolic disc orbibundles over $2$-orbifolds derived from quadrangles of bisectors via tessellation, we prove that $3\tau = 2e+2\chi$. Furthermore, we demonstrate that $-3|\tau| = 2e+2\chi$ holds when a section with no complex tangent planes is present.
    
    \section{Introduction}
    Consider a complex hyperbolic disc bundle $L \to \Sigma$ over an oriented, connected, closed surface with a genus greater than one. When we say that $L$ has a complex hyperbolic structure, we mean that $L$ is a quotient of the complex hyperbolic plane $\HH_\CC^2$ by a discrete group of holomorphic isometries, which is isomorphic to $\PU(2,1)$.
    Since the fibers are contractible, this discrete group is isomorphic to the fundamental group $G$ of the surface $\Sigma$, that is, there exists a discrete faithful representation $\rho: G \to \PU(2,1)$ such that $L = \HH_\CC^2/G$.

    Once we have a complex hyperbolic disc bundle, there are three discrete invariants at play: the  Euler characteristic $\chi$ of the surface, the Euler number $e$ of the disc bundle, and the Toledo invariant $\tau$ of the representation $\rho$. If we see $\Sigma$ as a section of $L$, we can think of $e$ as the Euler number of its normal bundle and $\frac32\tau$ is the Euler number of the complex line bundle $\wedge^{\!2} TL|_\Sigma$ (see \cite{GKL}).

    \begin{rmk} We orient the disc bundle and the surface in a way that their orientation matches the orientation of the total space, which is naturally oriented because it is a complex manifold. To be more specific, when we embed $\Sigma$ as a section, each point $\xx$ on $\Sigma$ has an orientation determined by vectors $u_1$ and $u_2$ in the tangent space $T_\xx \Sigma$. Similarly, the orientation of the disc bundle $L$ at $\xx$ is determined by vectors $v_1$ and $v_2$ in the tangent space $T_\xx L_\xx$. The compatibility of these orientations means that the vectors $u_1$, $u_2$, $v_1$, and $v_2$ agree with the orientation of the complex manifold $L=\HH_\CC^2/G$. Here, if $e_1,e_2$ is a $\CC$-linear basis for $T_\xx L$, then $e_1,ie_1,e_2,ie_2$ provides its natural orientation.
 \end{rmk}

The mentioned discrete invariants are involved in two key conjectures. The first is the complex variant of the Gromov-Lawson-Thurston conjecture, stating that an oriented disc bundle has a complex hyperbolic structure if, and only if, $|e| \leq |\chi|$. The second conjecture states that a complex hyperbolic disc bundle admits a holomorphic section if, and only if, $3\tau = 2e+ 2\chi$.

 \begin{rmk}
     The GLT-conjecture was initially stated for hyperbolic structure instead of complex hyperbolic (see \cite{GLT}). The complex variant was then proposed in \cite{discbundles}.
 \end{rmk}

	The presence of a holomorphic section means that we have $\Sigma$ embedded as Riemann surface in the Kähler manifold $L$. In this situation, it is easy to see how $3\tau = 2e+ 2\chi$ appears: consider the isomorphism of complex vector bundles $T\Sigma \oplus N\Sigma \simeq TL|_\Sigma$, where $N\Sigma$ is the normal bundle of $\Sigma$. Since $T\Sigma$ and $N\Sigma$ are complex line bundles, we obtain the following identity
    $$c_1(TL|_\Sigma) = c_1(T\Sigma)+ c_1(N\Sigma),$$
    where $c_1$ stands for the first Chern number. Thus,
    $$c_1(TL|_\Sigma) = c_1(\wedge^{\!2}TL|_\Sigma) = \frac32 \tau$$ 
    and, as consequence, $3 \tau = 2e+2\chi$,
    because for complex line bundles the first Chern number equals the Euler number.

    \begin{rmk} Since $\tau$ can be computed from the symplectic form $\omega$ of $\HH_\CC^2$ as $$\tau = \frac{4}{2\pi} \int_\Sigma \omega,$$
    where $\omega$ is the imaginary part of the Hermitian metric of $\HH_\CC^2$ (see Section \ref{sec basics}), we conclude that $e\leq -\chi$, because $\tau$ is negative here. For details about the Toledo invariant, see \cite{tol}, \cite{bot}.
    \end{rmk}
    
    Similarly, the presence of an anti-holomorphic section also results in $-3\tau = 2e+2\chi$. Indeed, we can take the representation $\rho$ as previously described and conjugate it by an anti-holomorphic isometry, thus providing a new disc bundle with a holomorphic section. The Euler number and the Euler characteristic are unchanged under this procedure, but the Toledo invariant changes sign.

    Regardless of the situation, when holomorphic or anti-holomorphic sections exist, the identity $-3|\tau| = 2e+2\chi$ always holds.

    The curious thing is that, to the best of the author's knowledge, all examples of complex hyperbolic disc bundles satisfy $-3|\tau| = 2e+2\chi$ (see Section \ref{section: Known examples}).

\begin{tcolorbox} \begin{center} \textbf{Objectives of this paper}\end{center}
    We aim to shed some light on the identity 
     \begin{equation}\label{eq k}
     -3|\tau| = 2e+ 2\chi \tag{$\varheartsuit$}
     \end{equation}
    which frequently emerges in complex hyperbolic disc bundles. 
    
    More precisely:
    \begin{enumerate}
        \item We show that the identity $3 \tau=2e+2\chi$ holds for a widely used construction based on quadrangles of bisectors. This result is significant because it enables us to calculate the value of $e$ using the Toledo invariant, which is generally straightforward to compute, whereas evaluating $e$ directly can be challenging. The fact that $3\tau = 2e + 2\chi$ is true for constructions based on quadrangles of bisectors was first conjectured by Sasha Anan'in, Carlos H. Grossi, Nikolay Gusevskii on \cite{discbundles} based on thousands of examples and it is nicknamed Kalashnikov conjecture\footnote{The mention of Kalashnikov here is meant as a humorous reference. There is an anecdote that suggests every Soviet factory, regardless of its intended production, would inevitably produce Kalashnikov rifles unintentionally. Similarly, in our search for examples, we consistently encounter the result $3\tau = 2e+ 2\chi$.}.
        
        \item We prove that if a complex hyperbolic disc bundle admits a section with no complex tangent planes, then the identity \eqref{eq k} holds.
    \end{enumerate}
    
    \end{tcolorbox}

\subsection{Construction of examples}
 We avoid working directly with surfaces when building examples because their fundamental groups are too complicated. Instead, we employ oriented, connected, closed hyperbolic $2$-orbifolds to build complex hyperbolic disc orbibundles, then use that every closed hyperbolic $2$-orbifold is finitely covered by a closed hyperbolic surface\footnote{Selberg's lemma guarantees that every cocompact Fuchsian group admits a finite index torsion-free normal subgroup.}, and finally pullback the disc orbibundle to a disc bundle over a surface, which will inherit the complex hyperbolic structure. Additionally, the relative values $e/\chi$ and $\tau/\chi$ are unchanged under pullback, thus if $3 \tau = 2e+2\chi$, for instance, holds on the orbifold level, then the same happens for the derived surfaces (see \cite{bot} for a detailed exposition about orbibundles and the described invariants). 
    
Let $G$ be a cocompact Fuchsian group, i.e., the quotient space $\Sigma:=\HH_\CC^1/G$ is an oriented, closed, and connected hyperbolic $2$-orbifold.
Consider as well a faithful discrete representation $\rho: G \to \PU(2,1)$ such that the quotient $L=\HH_\CC^2/G$ is a disc orbibundle over $\Sigma$. 
    
    More precisely, we have a smooth action of $G$ on the $4$-ball $\HH_\CC^1 \times \DD^2$ of the form $$g(x,f) = (gx,a(g,x)f),$$ where $a(g,x)$ is a automorphism of the disc $\DD^2$ and depends smoothly of $x$, and a $G$-equivariant diffeomorphism $\HH_\CC^1 \times \DD^2 \to \HH_\CC^2$. With these ingredients, we have the disc orbibundle $$\HH_\CC^2/G \simeq (\HH_\CC^1 \times \DD^2)/G \to \HH_\CC^1/G,$$
where this last map is simply $[x,f] \mapsto [x]$. See \cite[Euler number for orbigoodles]{bgr} for more details.
    
To construct such complex hyperbolic disc orbibundles, one usually builds a fundamental domain $\bf Q$ in the complex hyperbolic plane for the action of the fundamental group $G$ of $\Sigma$ and then fibers $\bf Q$ by discs in such a way that this fibration can be extended to the whole complex hyperbolic space via tessellation.

       \subsection{Known examples}
       \label{section: Known examples}
We provide a list of all complex hyperbolic disc bundles to the author's knowledge. All of them satisfy the equation \eqref{eq k}.

The first examples ever constructed were those in \cite{GKL}. The authors produced one complex hyperbolic disc bundle for each possible even Toledo number $\tau$ and we believe they satisfy $-3|\tau| = 2e + 2\chi$ by construction. This identity is not the one found in their paper. It is a small correction we propose in their computation of the Euler number (see Remark \ref{GKL euler} for details). 

For the remaining examples mentioned in this list, $3 \tau = 2e+2\chi$ and $\tau\leq 0$ were observed empirically. 
    
    The turnover group is defined as $$G(n_1,n_2,n_3)=\langle g_1,g_2,g_3: g_1^{n_1} = g_2^{n_2}=g_3^{n_3}=g_3g_2g_1 = 1 \rangle,$$ where $n_1,n_2,n_3$ are positive integers satisfying $\frac1{n_1}+\frac1{n_2}+\frac1{n_3}<1$. 
It is the fundamental group of the hyperbolic $2$-orbifold $\SP^2(n_1,n_2,n_3)$, a sphere with $3$ conic points of angles $\frac{2\pi}{n_1},\frac{2\pi}{n_2},\frac{2\pi}{n_3}$. In essence, the turnover is the simplest cocompact Fuchsian group.

    The turnover group was used in \cite{discbundles} with $n_1=n_2$ and $n_3=2$ to obtain thousands of examples of disc orbibundles, all of then satisfying $0<e/\chi <1/2$. 
    
    Each complex hyperbolic disc orbibundle up to isomorphism corresponds to a point of the $\PU(2,1)$-character variety of $G(n_1,n_2,n_3)$, the space of all faithful representations $G(n_1,n_2,n_3) \to \PU(2,1)$ modulo $\PU(2,1)$-conjugation. The examples in \cite{discbundles} are rigid, i.e., they form isolated points in the corresponding character variety, which means it is not possible to deform their complex hyperbolic structure. Misha Kapovich used this rigidity to prove that these examples actually admit holomorphic sections (see~\cite[Example 8.10]{kap2}).

    Non-rigid examples with similar construction are found in \cite{gaye}, where $0<e/\chi<1/2$ as well.     
    
    The turnover group with parameters $n_1,n_2,n_3$ with no restriction is used to construct new examples in \cite{bgr}. Unlike \cite{discbundles}, here we find hundreds of non-rigid examples. 
    Additionally, the space of the complex hyperbolic structures of each disc orbibundle is two-dimensional. The examples here have relative Euler number $e/\chi$ spread on the interval $[-1,1/2)$, including cotangent orbibundles ($e=-\chi$) and trivial orbibundles ($e=0$). We also found rigid examples with negative relative Euler numbers and $e=0$. It is unknown if these non-rigid examples have holomorphic sections.

    The example in \cite{sashanicolai} was the first example of a trivial bundle ($e=0$). It was constructed using representation of the hyperelliptic group $$H_5=\langle r_1,r_2,r_3,r_4,r_5: r_5r_4r_3r_2r_1 = r_i^2 =1\rangle,$$ the fundamental domain of $\SP^2(2,2,2,2,2)$, into $\PU(2,1)$. Currently, Felipe de Aguilar Franco and I are building non-rigid trivial bundles using the same techniques.

\begin{rmk}\label{GKL euler}
 The formula present in \cite{GKL} is $e-\chi = |\tau|/2$. As mentioned before, we suspect that their formula for the Euler number has a minor error. They used that the Euler number of the normal bundle of a Lagrangian submanifold is $\chi$, which is false. The correct value is $ -\chi$. Indeed, if $\Sigma$ is an oriented, connected, closed Lagrangian submanifold of a Kähler $4$-manifold, then the Euler number of its normal bundle is $-\chi(\Sigma)$ because there is a natural orientation-reversing isomorphism between the tangent and normal bundles of $\Sigma$, given by $v \mapsto iv$. Another proof of this fact is given at Lemma \ref{lagrangian section}. 
 
Now we explain why the identity $-3|\tau| = 2e+2\chi$ holds in their examples. In \cite[4.2. Calculation of Euler number and Toledo invariant]{GKL}, we have a disc bundle $M$ over the oriented, connected, closed surface $\Sigma$ with genus $g>1$. The surface $\Sigma$ is embedded in the complex hyperbolic disc bundle as a piecewise totally geodesic surface. There are two disjoint circles in $\Sigma$ separating it in two surfaces $\Sigma_1$ and $\Sigma_2$. Thus, each $\Sigma_k$ is a surface having two disjoint circles as the boundary. 

The surface $\Sigma_1$ is embedded as a complex geodesic component and $\Sigma_2$ is embedded as a Lagrangian component.

The correct formula for the Euler number is $$e=\chi(\Sigma_1)/2-\chi(\Sigma_2),$$ because $\Sigma_1$ is embedded as complex geodesics and, as consequence, the contribution from its normal bundle is $\chi(\Sigma)/2$. Similarly, $-\chi(\Sigma_2)$ is the contribution arising from the normal bundle over the Lagrangian part.

From their construction, the genus of $\Sigma_1$ is $g_1:=g-1-t$ and the genus of $\Sigma_2$ is $g_2:=t$, where $t:=g-1-|\tau|/2$, a non-negative integer because $|\tau|\leq |\chi|$ (Toledo's rigidity). Now, the Euler characteristic of $\Sigma_k$ is given by $$2-2g_k = \chi(\Sigma_k) + \chi(\DD^2 \sqcup \DD^2) - \chi(\SP^1 \sqcup \SP^1)= \chi(\Sigma_k) + 2,$$ 
because by gluing discs to the boundaries of $\Sigma_k$ we obtain a closed surface of genus $g_k$. Simplifying the above formula we obtain $\chi(\Sigma_k) = -2g_k$ and, as consequence,
\begin{align*}
e&=\chi(\Sigma_1)/2-\chi(\Sigma_2) 
\\&= -g_1+2g_2
\\&= -g+1 + 3t
\\& =2g-2 -\frac{3|\tau|}2.
\end{align*}

Therefore, $2e+2\chi = -3|\tau|$.
\end{rmk}
 
    \section{The complex hyperbolic plane, bisectors, and quadrangles}
    \label{sec basics}
    We present basic facts about real and complex hyperbolic geometry from the projective viewpoint following the works \cite{discbundles}, \cite{coordinatefree}, and \cite{goldmanbook}.
    
    Let $\KK = \RR$ or $\CC$. A $(n+1)$-dimensional $\KK$-linear space $V$ endowed with a Hermitian form $\langle - , -  \rangle $ with signature $-+\cdots+$ gives rise to the $n$-dimensional $\KK$-hyperbolic space
    $$\HH_\KK^n = \{\xx \in \PP_\KK(V): \langle x, x\rangle <0\},$$
    where we use $x \in V\setminus\{0\}$ to denote a representative for $\xx \in \PP_\KK(V)$. As a metric space, the distance between two points in $\HH_\KK^n$ is given by $\cosh^2\left(d(\xx,\yy)\right)=\ta(\xx,\yy)$, where the function $$\ta(\xx,\yy) := \frac{\langle x,y \rangle \langle y,x \rangle }{\langle x,x \rangle \langle y,y \rangle}.$$  
    is called {\it tance}. 
    
    From a differential geometry viewpoint, we have the natural isomorphism $T_\xx \HH_\KK \simeq \hom(\KK x, x^\perp)$ and the Hermitian metric
    $$\langle u,v \rangle = -\frac{\langle u(x),  v(x) \rangle}{ \langle x,x \rangle}, \quad  u,v \in T_\xx \HH_\KK^n.$$

    In the real case, the Hermitian metric is a Riemannian metric of constant sectional curvature $-1$. In the complex case, $\HH_\CC^n$ is a Kähler manifold with Riemannian metric $g := \re \langle -,- \rangle$ and symplectic form $\omega:= \im \langle -, - \rangle$. The sectional curvature of $\HH_\CC^n$ assume all values in the interval $[-4,-1]$ when $n>1$. For $n=1$, the space $\HH_\CC^1$ is a Poincaré disc with curvature $-4$.

    A geodesic $G$ is described as the projectivization of a real $2$-dimensional subspace $W$ of $V$ such that $\langle-,-\rangle|_{W\times W}$ is real-valued and has signature $-+$. More precisely, $G= \PP_\KK(W) \cap \HH_\KK^n$. In the complex hyperbolic geometry, we also have the complex geodesics, non-trivial intersections of complex projective lines with $\HH_\CC^n$. It is important to note that every geodesic $G$ is contained in one, and only one, complex geodesic $L$. Indeed, if $G= \PP_\CC(W) \cap \HH_\CC^2$, then $L= \PP_\CC(W\oplus iW) \cap \HH_\CC^2$.

    The complex hyperbolic plane is $\HH_\CC^2$ and it admits $3$ types of non-trivial totally geodesic submanifolds: geodesics, complex geodesics, and real planes. A real plane is an isometrically embedded Beltrami-Klein model $\HH_\RR^2$ of the form $\PP_\CC(W) \cap \HH_\CC^2$, where $W \subset V$ is a $3$-dimensional real subspace of $V$ such that $\langle -, - \rangle$ restricted to $W$ is real-valued and has signature $-++$. Observe that each complex geodesic is a $\HH_\CC^1$ embedded as a Riemann surface and each real plane is a $\HH_\RR^2$ embedded as a Lagrangian submanifold. Additionally, there are no $3$-dimensional totally geodesic submanifolds, a misfortune, because they would be natural candidates to build fundamental domains. Nevertheless, we have bisectors. 

    Complex geodesics are obtained from the projectivization of two-dimensional complex subspaces of $V$ with signature $-+$. Thus, complex geodesics are always of the form $\PP(p^\perp) \cap \HH_\CC^2$, where $\pp \in \PP_\CC(V)$ and $\langle p,p \rangle >0$, that is, $\pp$ is a positive point. Two distinct complex geodesics $L_1, L_2$, with $L_i = \PP(p_i^\perp) \cap \HH_\CC^2$, are said to be ultraparallel, asymptotic, concurrent if the point $\PP(p_1^\perp) \cap \PP(p_2^\perp)$ is positive, null, negative, respectively. Algebraically, the two complex geodesics are ultraparallel, asymptotic, concurrent if $\ta(\pp_1,\pp_2)>1$, $\ta(\pp_1,\pp_2)=1$, $\ta(\pp_1,\pp_2)<1$, respectively. 
    
    Bisectors appear naturally when constructing Dirichlet domains, where  hypersurfaces like
    $$\{\xx \in \HH_\CC^2: d(\pp,\xx)=d(\xx,\pp')\}$$
    are used to construct the boundary for the domain. Here $\pp,\pp'$ are distinct points of $\HH_\CC^2$.

    A hypersurface like the one above is called a bisector. Nevertheless, this definition is quite complicated to manipulate because different pairs of points $\pp,\pp'$ can produce the same bisector.

    An algebraic way of defining such an object is the following: Consider a geodesic $G=\PP_\CC(W)\cap \HH_\CC^2$, where $W$ is real $2$-dimensional subspace of $V$ such that $\langle-,-\rangle|_{W\times W}$ is real-valued and has signature $-+$. The complex geodesic $L = \PP_\CC(W\oplus i W) \cap \HH_\CC^2$ contains $G$. Since $W+iW$ is a two-dimensional complex subspace of $V$, it possesses a unit vector $f$ orthogonal to it by projective duality, i.e., $f^\perp = W+iW$. Additionally,  $\langle f,f \rangle = 1$ because $W\oplus iW$ has signature $-+$ and the signature os $V$ is $-++$.
    The bisector defined by $G$ is given by $B=\PP_\CC(W+\CC f)\cap\HH_\CC^2$ and, topologically, it is a cylinder:
    $$B = \bigsqcup_{\xx \in G} \PP_\CC(\CC x + \CC f)\cap \HH_\CC^2.$$
    The geodesic $G$ is called the real spine of the bisector, the complex geodesic $L$ is its complex spine, and $\ff$ is its polar. The complex geodesics $\PP_\CC(\CC x + \CC f)\cap \HH_\CC^2$, with $\xx\in G$, are called slices.

    The boundary of a fundamental domain constructed using bisectors is formed by segments of bisectors: if we have two ultraparallel complex geodesics $C_1,C_2$, then there exists a unique geodesic $G$ orthogonal to both complex geodesics. The geodesic intersect $C_1$ and $C_2$ in points $\cc_1$ and $\cc_2$ and the segment of geodesic $G[\cc_1,\cc_2]$ connect the two complex geodesics. Since $C_1,C_2$ are ultraparallel, their corresponding projective lines intersect in a positive point $\ff$. Thus we have the bisector defined by $G$ with $\ff$ as polar. The segment of bisector connecting $C_1$ to $C_2$ is just 
        $$B[C_1,C_2] = \bigsqcup_{\xx \in G[\cc_1,\cc_2]} \PP(\CC x + \CC f).$$

    With these ingredients, we are prepared to talk about quadrangles of bisectors.
    Consider four pairwise ultraparallel complex geodesic $C_1,C_2,C_3,C_4$. We can define the quadrangle
    \begin{equation}\tag{$\blacklozenge$}\label{quadrangle}
    \mathcal Q = B[C_1,C_2] \cup B[C_2,C_3] \cup B[C_3,C_4] \cup B[C_4,C_1]
    \end{equation}
    and on the right conditions $\mathcal Q$ bounds a $4$-ball.

    	\begin{figure}[H]
		\centering
		\begin{minipage}{.5\textwidth}
			\centering
	           \includegraphics[scale = .7]{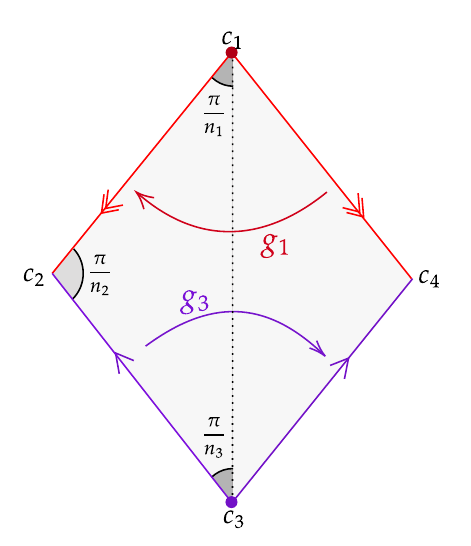}
		\end{minipage}%
		\begin{minipage}{.5\textwidth}
			\centering
			\includegraphics[scale =0.6]{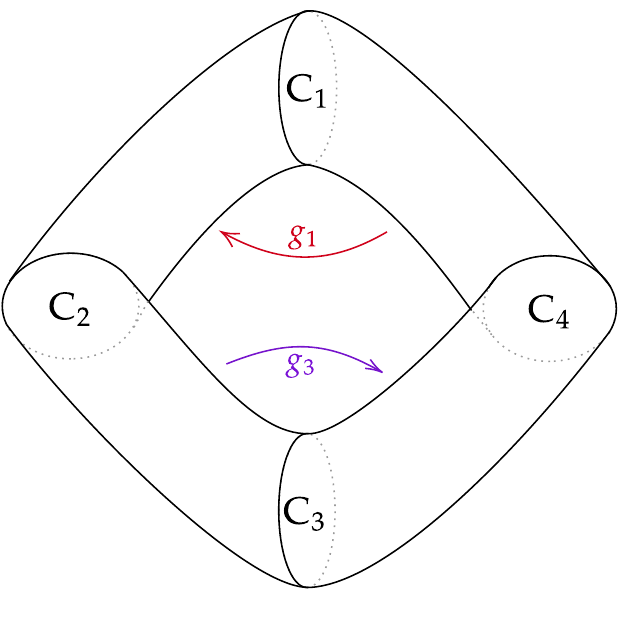}
		\end{minipage}
		\caption{ Turnover fundamental domain in $\HH_\CC^1$ and $\HH_\CC^2$.}
		\label{turnover fundamental domain}
	\end{figure}

    The quadrangle is commonly used to build complex hyperbolic disc orbibundle since its boundary is already fibered by discs. The examples in \cite{discbundles}, \cite{bgr}, \cite{sashanicolai}, and \cite{gaye} were constructed using quadrangles. The reason for this common use is not really a surprise. If we have to build examples, it is better to construct over $2$-orbifolds, since they have simple fundamental domains and the idea is to mimic those inside $\HH_\CC^2$. 
 
    The fundamental domain for the turnover group is just a quadrilateral like the one described in the Figure~\ref{turnover fundamental domain}: we construct a triangle with vertices $\cc_1,\cc_2,\cc_3$ with inner triangle $\pi/n_1,\pi/n_2,\pi/n_3$ and reflect it with respect to the line $\cc_1,\cc_3$, thus obtaining a quadrilateral. The isometries $g_1$ and $g_3$ are just rotations centered at $\cc_1,\cc_3$ by angles $-2\pi/n_1$ and $-2\pi/n_3$ respectively. The isometry $g_2$ is then defined as $g_3^{-1}g_1^{-1}$ and it is indeed a rotation of angle $-2\pi/n_2$ centered at $\cc_2$. 

    Now assume we have a faithful representation of $G(n_1,n_2,n_3) \to \PU(2,1)$.
    The corresponding version of the fundamental domain for the action of the turnover on $\HH_\CC^2$ (Figure \ref{turnover fundamental domain} to the right) is formed by four segments of bisectors connecting the vertices $C_1,C_2,C_3,C_4$. Here $C_i$ is a complex geodesic stable under action of $g_i$ for $i=1,2,3$ and $C_4 := g_1^{-1}C_2 = g_3 C_2$. Under the right conditions, which we discuss in Section~\ref{sec: geo quadrangle}, this quadrangle bound a ball fibered by discs respecting the fibration of the boundary and tessellate $\HH_\CC^2$, thus forming a complex hyperbolic disc orbibundle $\HH_\CC^2/G \to \HH_\CC^1/G$.

    \begin{figure}[H]
		\centering
		\begin{minipage}{.5\textwidth}
			\centering
	           \includegraphics[scale = .7]{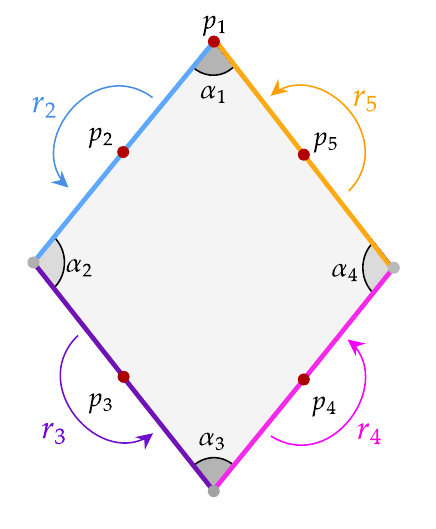}
		\end{minipage}%
		\begin{minipage}{.5\textwidth}
			\centering
			\includegraphics[scale =0.6]{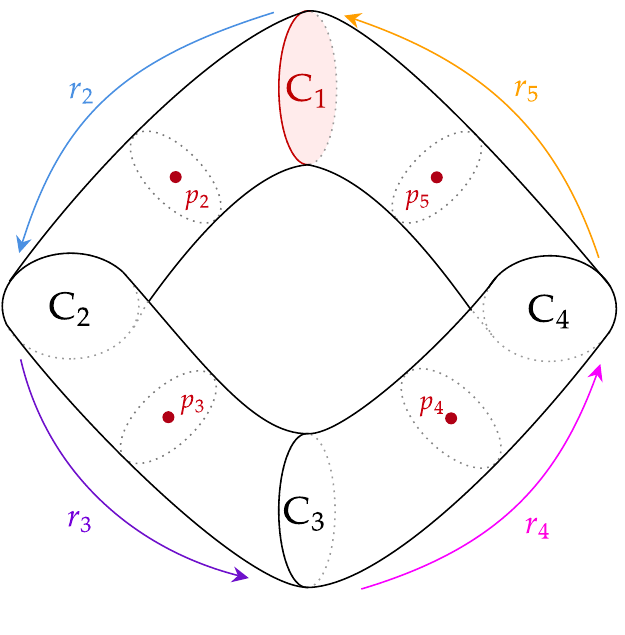}
		\end{minipage}
		\caption{ hyperelliptic $H_5$ fundamental domain in $\HH_\CC^1$ and $\HH_\CC^2$.}
		\label{hyper fundamental domain}
	\end{figure}
    
    The fundamental group for the hyperelliptic group $H_5$ is the quadrilateral to the left in the Figure \ref{hyper fundamental domain}, where we assume that the sum of the inner angles $\alpha_i$ total $\pi$. The isometries $r_2,r_3,r_4,r_5$ are reflections at $\pp_2,\pp_3,\pp_4,\pp_5$, where $\pp_i$, with $i \neq 1$, is the middle point of a geodesic segment as described in the Figure~\ref{hyper fundamental domain} and $\pp_1$ is the vertex at the top. Due to the choices of angles, $r_1:=r_5 r_4 r_3 r_2$ is a reflection at $\pp_1$.
    The candidate for a fundamental domain in $\HH_\CC^2$ for a faithful representation $H_5 \to \PU(2,1)$ is the following: take $\pp_1$ to be a positive point and $\pp_2,\pp_3,\pp_4,\pp_5$ to be negative points. Define the reflections
    $$r_i(x) = -x+ 2\frac{\langle x,p_i\rangle}{\langle p_i,p_i\rangle} p_i.$$
    The isometries $r_2,r_3,r_4,r_5$ are reflections at points $\pp_i$ in $\HH_\CC^2$. On the other hand, $r_1$ is a reflection at the complex geodesic $C_1:=\PP_\CC(p_1^\perp) \cap \HH_\CC^2$. If we are able to find points $\pp_1,\ldots,\pp_5$ such that $r_5r_4r_3r_2r_1 = 1$ in $\HH_\CC^2$, then we can construct $C_2 = r_2 C_1$, $C_3=r_3 C_2$, $C_4 = r_4 C_3$ and under the right conditions we have a tessellation of $\HH_\CC^2$. This technique was the one used on the paper \cite{sashanicolai} to build the first instance of complex hyperbolic trivial disc bundle, and Felipe Franco and I have been exploring faithful representations of $H_5 \to \PU(2,1)$ using the same construction.

    \newpage

    \section{The geometry of the quadrangle}\label{sec: geo quadrangle}

\begin{wrapfigure}[17]{r}{0.53\textwidth}
 
        \includegraphics[width=0.47\textwidth]{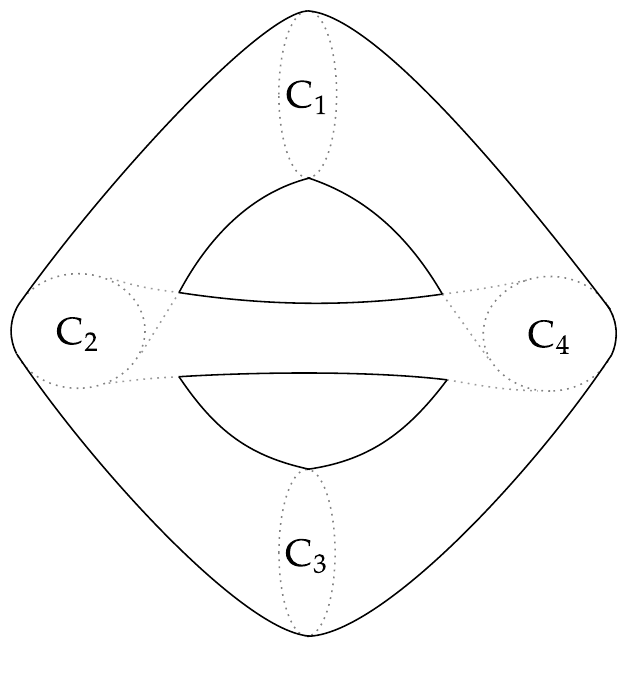}
        \vspace{0.19cm}
        \caption{Two adjacent triangles of bisectors.}
    \end{wrapfigure}
    
    Consider the quadrangle $\mathcal Q$ as defined in \eqref{quadrangle}. We now lay down the conditions for this quadrangle to bound a ball and be fibered by discs.
    The terms in italics will be explained next.

    \begin{itemize}
        \item[{\bf K1}] All pairs of vertices $C_i$ must be  ultraparallel to one another. 
        \item[{\bf K2}] The triangles of bisectors $\triangle (C_1, C_2, C_4)$ and $\triangle (C_3, C_4,C_2)$ must be {\it transversal} and {\it counterclockwise-oriented}.
        \item[{\bf K3}] The triangles of bisectors  $\triangle (C_1, C_2, C_4)$ and $\triangle (C_3, C_4,C_2)$ must be {\it transversally adjacent}.
    \end{itemize}

    Now, we detail the terms used above. 

    \medskip
    
    Let $\pp_1,\pp_2,\pp_3,\pp_4$ be the polar points of the projective lines defining $C_1,C_2,C_3,C_4$.

    \medskip

    \begin{rmk} The intersection of two transversal bisectors is always a complex geodesic, slice shared by both bisectors (see \cite{discbundles}).
    \end{rmk}
    
    The {\bf triangle of bisectors $\triangle (C_1, C_2, C_3)$ is transversal} when adjacent bisectors (seemed as extended hypersurfaces) intersect transversally.  Define $$t_{ij} := \sqrt{\ta(p_i,p_j)}\quad \text{and}\quad \varepsilon_0+ i \varepsilon_1 := \frac{ \langle p_1,p_2 \rangle\langle p_2,p_3 \rangle\langle p_3,p_1 \rangle}{|\langle p_1,p_2 \rangle\langle p_2,p_3 \rangle\langle p_3,p_1 \rangle|}.$$
    Algebraically, to say the triangle $\triangle (C_1, C_2, C_3)$ is transversal mean that \begin{align*}
    \varepsilon_0^2 t_{12}^2 + t_{23}^2+t_{31}^2&<1+2t_{12}t_{23}t_{31} \varepsilon_0,\\
    \varepsilon_0^2 t_{31}^2 + t_{12}^2+t_{23}^2&<1+2t_{12}t_{23}t_{31} \varepsilon_0,\\
    \varepsilon_0^2 t_{23}^2 + t_{31}^2+t_{12}^2&<1+2t_{12}t_{23}t_{31} \varepsilon_0.
    \end{align*}
    
    To say that such a transversal triangle is {\bf counterclockwise-oriented} means that $\varepsilon_1<0$. To understand the meaning of this statement it is important to understand that the space of all transversal and counterclockwise-oriented triangles of bisectors is path-connected, which means we can deform one on the other (see \cite[Lemma A. $31$]{discbundles}), and among then there is a very special one, the triangle of bisectors over a complex geodesic: consider a complex geodesic $L = \PP_\CC(f^\perp)\cap \HH_\CC^2$, where $f$ is a positive point, and a geodesic triangle with vertices $c_1,c_2,c_3$ in $L$ such that the path going along $c_1,c_2,c_3$ forms a counterclockwise loop. In this case, the triangle of bisectors defined by $C_i = \PP_\CC(\CC c_i \oplus \CC f)$ is transversal and counterclockwise-oriented with $\varepsilon_1 <0$. In fact, its value is  $\varepsilon_0+i \varepsilon_1 = \exp\left(-2\,\textbf{area}(\triangle(c_1,c_2,c_3) i\right)$ with $\textbf{area}(\triangle(c_1,c_2,c_3)) < \pi/4$ because the curvature of a complex geodesic is $-4$. Thus, to be a transversal and counterclockwise-oriented triangle of bisectors just mean it was obtained from this simple configuration over a complex geodesic.
    Observe that the conditions {\bf K1} and {\bf K2} guarantee that each triangle of bisector bounds an open $4$-ball. Furthermore, its boundary inside $\HH_\CC^2$ is a solid torus, since each bisector is a cylinder.

    To say that the triangles $\triangle (C_1, C_2, C_4)$ and $\triangle (C_3, C_4, C_2)$ are {\bf transversally adjacent} mean that the bisectors for $B(C_1,C_2)$ and $B(C_3,C_2)$ intersect transversally at $C_2$, the bisectors for $B(C_1,C_4)$ and $B(C_3,C_4)$ intersect transversally at $C_4$, and $C_3$ is inside the sector defined by the segments of bisectors $B[C_1,C_2)$ and $B[C_1,C_4)$ starting at $C_1$ and extended pass $C_2$ and $C_4$ (see subsection \cite[6.1. Quadrangle of bisectors]{bgr} and \cite{discbundles}). Additionally, under {\bf K1}, {\bf K2}, and {\bf K3}, the quadrangle $\mathcal Q$ bounds an open $4$-ball in the complex hyperbolic plane. The union of the quadrangle $\mathcal Q$ with the ball bounded by it forms a domain $\bf Q$ in $\HH_\CC^2$.

    Following Lemma A. $31$ from \cite{discbundles}, there is an isotopy between two transversal and counterclockwise-oriented triangles of bisectors in such a manner that in each step of the isotopy the region is a transversal and counterclockwise-oriented triangle. Adapting this result, if we have a quadrangle $\mathcal Q$ satisfying the three conditions {\bf K1}, {\bf K2}, and {\bf K3} and $\ff$ is the polar of the bisector $B[C_2,C_4]$ dividing the quadrangle in half, then we can deform the quadrangle to a quadrangle $\mathcal Q'$ over the complex geodesic $\PP(f^\perp)\cap \HH_\CC^2$ by deforming each triangle $\triangle (C_1, C_2, C_4)$ and $\triangle (C_3, C_4,C_2)$ separately always keeping $B[C_2,C_4]$ unmoved. Thus we have an isotopy between $\bf Q$ and $\bf Q'$, a quadrangle over a complex geodesic. This isotopy is formed by quadrangles satisfying {\bf K1} and {\bf K2} at every step (see Subsection \cite[7.2. Deformation lemma]{bgr}).

    More precisely, there is an isotopy $F_t: \textbf Q \to \HH_\CC^2$ such that $F_0$ is the inclusion and $F_1$ maps $Q$ to $Q'$. For each $t$, the image $F_t(\mathcal Q)$ is a quadrangle of bisectors, each bisector is mapped to a bisector, each vertex of $\mathcal Q$ is mapped to a vertex of $F_t(\mathcal Q)$, each slice is mapped to a slice isometrically. 

    Since the ball $\bf Q'$ is naturally fibered by discs, we can transport such fibration to $\bf Q$. Thus, we reached the required fibration necessary for the construction of complex hyperbolic disc orbibundles.
    \newpage
    \section{The Kalashnikov}
    
    The Kalashnikov conjecture states that every fundamental domain created using a quadrangle of bisectors as described in section \ref{sec: geo quadrangle} leads to the identity $3\tau = 2 e + 2 \chi$, where, by convention, we orient the disc orbibundle in agreement with the orientation of the discs fibering the quadrangle (if we orient the disc orbibundle with opposite orientation, then $-3\tau = 2e+ 2\chi$).

    Our objective here is to prove the validity of such conjecture. Consider a quadrangle $\mathcal Q$ satisfying {\bf K1}, {\bf K2}, and {\bf K3}, a cocompact Fuchsian group $G$ and a discrete faithful representation $G\to \PU(2,1)$ such that the quadrangle defines a fundamental domain for the complex hyperbolic disc orbibundle $\HH_\CC^2/G \to \HH_\CC^1/G$. Let us denote the total space $\HH_\CC^2/G$ by $L$ and the base space $\HH_\CC^1/G$ by $\Sigma$.

    We can see $\Sigma$ as a section of $L \to \Sigma$. On the complex hyperbolic plane, we consider a lift $\tilde \Sigma$ for this section, meaning that $\tilde \Sigma$ is $G$-invariant and $\Sigma = \tilde \Sigma/G$. We can also identify $L$ with a rank $2$ oriented vector orbibundle by identifying $L_\xx$ with $T_\xx L_\xx$, with  $\xx \in \Sigma$, that is, we identify each $L_\xx$ with the plane tangent of $L_\xx$ at~$\xx \in \Sigma$.
    
    Note that $\tilde \Sigma$ can be seen as a $G$-equivariant smooth embedding of the disc $\HH_\CC^1$ into $\HH_\CC^2$. Thus, the polygon $P:= \mathbf Q \cap \tilde \Sigma$ is a fundamental domain for $\HH_\CC^1/G$.

    \begin{rmk}For an $n$-dimensional $\RR$-linear space $V$, the quotient $V/\RR_{>0}$ is an $(n-1)$-sphere.
    
    Similarly, if $E \to \Sigma$ is a rank $2$ oriented vector orbibundle, we denote by $\SP(E) \to \Sigma$ its associated circle bundle, where $\SP(E)_\xx = E_\xx/ \RR_{>0}$. 
    \end{rmk}

    Consider two unit vector fields  $u,v$ over $P$, where $u$ is tangent to $P$ and $v$ is normal to $P$. 
    
    \begin{lemma}[\bf Extension Lemma]\label{extension lemma} The section $u \wedge v$ of $\wedge^{\!\!2} T\HH_\CC^2$ defined over $\partial P$ can be extended to a non-vanishing section on $P$. 
    \end{lemma}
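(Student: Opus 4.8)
The plan is to exploit the explicit topology of $P$ and its boundary, together with the isotopy $F_t$ from Section~\ref{sec: geo quadrangle}, to reduce the extension problem to a computation of a winding number along $\partial P$. Since $P$ is a polygon (a disc), a non-vanishing section of the rank-$2$ bundle $\wedge^{\!2}T\HH_\CC^2$ over $P$ restricting to a prescribed section $s_0 := u\wedge v$ on $\partial P$ exists if and only if the class of $s_0|_{\partial P}$ in $\pi_1$ of the fiber $(\wedge^{\!2}T_\xx\HH_\CC^2)\setminus\{0\}\simeq \RR^2\setminus\{0\}$ is trivial; that is, the obstruction is a single integer, the degree of the Gauss-type map $\partial P \to \SP^1$ obtained by trivializing $\wedge^{\!2}T\HH_\CC^2$ over $P$ and normalizing $s_0$. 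So the first step is to fix such a trivialization of $\wedge^{\!2}T\HH_\CC^2|_P$ — this is unobstructed because $P$ is contractible — and to phrase the lemma as the vanishing of this winding number.

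Next I would use the isotopy. The quadrangle $\mathbf Q$ is isotopic through admissible quadrangles $F_t(\mathbf Q)$ to a quadrangle $\mathbf Q'$ lying over a single complex geodesic $\PP(f^\perp)\cap\HH_\CC^2$, with each slice carried isometrically to a slice and $B[C_2,C_4]$ fixed. The induced family $P_t := F_t(\mathbf Q)\cap F_t(\tilde\Sigma)$ (pushing $\tilde\Sigma$ along by $F_t$) gives an isotopy of polygons, and along it the pair $(u,v)$ of tangent/normal unit fields varies continuously; since the winding number is a homotopy invariant and $\wedge^{\!2}T\HH_\CC^2$ is canonically trivial along the isotopy (pull back the chosen trivialization), the obstruction for $\mathbf Q$ equals the obstruction for the model quadrangle $\mathbf Q'$. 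The key point here is that everything in sight — the bisectors, their slices, the fibering discs, and hence the section $\tilde\Sigma$ near $\partial\mathbf Q$ — is controlled in the model: $\mathbf Q'$ sits over a complex geodesic $L_0$, the fibering discs are the slices $\PP_\CC(\CC c\oplus\CC f)\cap\HH_\CC^2$ for $c$ ranging over the geodesic polygon $P'\subset L_0$, and the zero section is just $L_0\cap\mathbf Q' = P'$.

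In the model the computation becomes transparent. Over $L_0$, the tangent bundle splits $T\HH_\CC^2|_{L_0} = TL_0\oplus N L_0$ as complex line bundles, and along $P'$ the tangent field $u$ lies in $TL_0$ while the normal field $v$ may be taken in $NL_0$ (the slice directions); thus $u\wedge v$ is, up to the fixed trivialization, essentially $u\otimes v$ with $u$ turning once around $\partial P'$ (Gauss--Bonnet for the geodesic polygon $P'$ in the curvature-$-4$ disc, contributing the Euler characteristic of a disc, i.e.\ $+1$) while $v$, being a section of the restriction of a fixed complex line bundle, contributes $0$. Hence the total winding number is that of a nowhere-zero field on the boundary of a disc that extends over the disc — namely $0$ — so $s_0|_{\partial P}$ extends. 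I would make the bookkeeping precise by writing $\wedge^{\!2}T\HH_\CC^2|_{L_0}\cong \wedge^{\!2}(TL_0\oplus NL_0)\cong TL_0\otimes NL_0$ and tracking each factor's contribution to the winding separately.

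The main obstacle I anticipate is not the model computation but the transfer step: one must check that $\tilde\Sigma$ (a priori an arbitrary $G$-equivariant section) can be chosen, near $\partial\mathbf Q$, to be carried along the isotopy $F_t$ in a way compatible with the slice structure, so that the pair $(u,v)$ genuinely deforms to the model pair. Concretely, the delicate point is that $F_t$ is only asserted to map bisectors to bisectors and slices to slices isometrically — it need not be a global diffeomorphism agreeing with a section — so I would need to argue that the relevant data (the $2$-plane field $u\wedge v$ along $\partial P$, up to homotopy) depends only on how $\partial\mathbf Q$ is fibered by discs, which \emph{is} preserved by $F_t$, and not on the interior behavior of $\tilde\Sigma$. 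Once that reduction is justified, the winding-number count above finishes the proof.
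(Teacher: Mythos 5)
Your overall strategy coincides with the paper's: interpret the obstruction as the homotopy class of the boundary loop $\xx \mapsto u(\xx)\wedge v(\xx)$ in the nonzero locus of $\wedge^{\!2}T\HH_\CC^2$, carry it along the isotopy $F_t$ to the model quadrangle over a complex geodesic, and conclude there. However, the transfer step you flag as ``the main obstacle'' is precisely where the paper's proof does its real work, and you leave it unresolved. The paper's argument is concrete: at a boundary point $\xx\in\partial P$ the fiber direction $v(\xx)$ is tangent to the slice of the quadrangle through $\xx$ (a complex geodesic), while $u(\xx)$ is not; since $F_t$ carries slices to slices isometrically, the transported fields $u_t = \mathrm d F_t\left(u\circ F_t^{-1}\right)$ and $v_t = \mathrm d F_t\left(v\circ F_t^{-1}\right)$ retain this property for every $t$, so $u_t\wedge v_t$ never vanishes on $\partial F_t(P)$: the slice's tangent space is the complex line $\CC v_t(\xx)$, which does not contain $u_t(\xx)$. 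This is exactly what keeps the loop inside the nonzero locus throughout the deformation and makes the winding number constant; without it, homotopy invariance does not apply, because the section could cross zero during the isotopy. Your proposed substitute --- that the homotopy class ``depends only on how $\partial\mathbf Q$ is fibered by discs'' --- is the right intuition but is not an argument.

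There is also an internal inconsistency in your model computation. You assert that $u$ contributes $+1$ to the winding number by Gauss--Bonnet and $v$ contributes $0$, and then conclude that the total is $0$; if the contributions really were $+1$ and $0$, the total would be $1$ and the lemma would fail. The $+1$ is an error: Gauss--Bonnet governs the rotation of the field tangent to the boundary curve, which is not $u$. The relevant quantity is the winding of $u$ restricted to the boundary, measured against a trivialization of $TL_0$ over the model polygon, and this is $0$ precisely because $u$ is a globally defined nowhere-vanishing section over that contractible polygon; the same reasoning gives $0$ for $v$ in the slice directions. This corrected count is in substance what the paper does in the model: $u_1\wedge v_1$ is already a nonvanishing extension over $F_1(P)$, since $u_1$ spans the complex tangent line of the complex geodesic and $v_1$ is tangent to the slices, which are transverse to it.
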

    \begin{proof} 
    
    Let $F_t$ be the isotopy described by the end of Section \ref{sec: geo quadrangle}. We need to show that the loop 
    \begin{align*}
    \lambda_0:\partial P &\to \SP\left(\wedge^{\!\!2} T\HH_\CC^2\right)\\
    \xx &\mapsto  u(\xx)\wedge v(\xx)
    \end{align*} is contractible, i.e., we need to show that $$\lambda_0 \simeq 0\quad \text{in} \quad \mathrm H_1\left(\SP\left(\wedge^{\!\!2} T\HH_\CC^2\right),\ZZ\right).$$

    Note that $$u_t(x) = \mathrm d F_t \left(u\left(F_t^{-1}(x)\right)\right) \quad \text{and}\quad v_t(x) = \mathrm d F_t \left(v\left(F_t^{-1}(x)\right)\right)$$ define non-zero vector for each $t \in [0,1]$ and $x \in \partial F_t(P)$. Additionally, $u_t(\xx) \wedge v_t(\xx)$ never vanishes for $t \in [0,1]$ and $\xx \in \partial F_t(P)$, because $v_t(\xx)$ is always tangent to a slice (complex geodesic) of the bisectors forming the quadrangle $F_t(\mathcal Q)$ and $u_t(\xx)$ is not. 
    
    Thus, the section
    \begin{align*}
    \lambda_1:F_1(\partial P) &\to \SP(\wedge^{\!\!2} T\HH_\CC^2)\\
    \xx &\mapsto  u_1(\xx)\wedge v_1(\xx)
    \end{align*} 
    defines a loop in $\SP(\wedge^{\!\!2} T\HH_\CC^2)$ equal to $\lambda_0$ in homology.

    Since $u_1$ and $v_1$ extend over $F_1(P)$ via the isotopy, we have $u_1 \wedge v_1$ defined on $F_1(P)$. Note that this section does not vanish, because $u_1$ and $v_1$ are non-vanishing and $u_1$ is tangent to a complex geodesic to which $v_1$ is never tangent. Thus, we conclude $u_1 \wedge v_1$ does not vanish.
    Therefore, $\lambda_0  \simeq 0$ in homology. Thus, $u \wedge v|_{\partial P}$ extends to a non-vanishing section over $P$.
    
    \end{proof}

    Now we use the following trick. We have the rank $2$ oriented vector orbibundle $L$ over $\Sigma$. Let $\bf T$ be the vector orbibundle normal to $L$ in $T(\HH_\CC^2/G)|_{\Sigma}$. Note that $\bf T$ is a rank $2$ oriented vector orbibundle isomorphic to the tangent space $T\Sigma$. Thus, we identify $T\Sigma$ with $\bf T$ and, as consequence, the Euler characteristic of $\Sigma$ is the Euler number of $\bf T$. 
    
    \begin{thm} The identity $3\tau = 2e+2\chi$ holds if we orient the orbibundle in accordance with the discs fibering the quadrangle. That is, the Kalashnikov conjecture is true.
    \end{thm}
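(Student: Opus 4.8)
The plan is to decompose the bundle $TL|_\Sigma$ over the section $\Sigma$ into the two rank-$2$ oriented orbibundles $L$ (the fiber direction, identified with the normal bundle of the section) and $\mathbf T$ (the complement, identified with $T\Sigma$), so that as oriented vector bundles $TL|_\Sigma \simeq L \oplus \mathbf T$ and hence the Euler number of $\wedge^{\!2}TL|_\Sigma$ is $e + \chi$. By the description recalled in the introduction, that Euler number equals $\frac32\tau$, so the identity $3\tau = 2e+2\chi$ is equivalent to showing that the splitting is correct, i.e. that the relevant orientations add up as claimed rather than differing by a sign. The whole point of the Extension Lemma is to pin this down: it produces a \emph{global non-vanishing} section $s = u\wedge v$ of $\wedge^{\!2}T\HH_\CC^2$ over the fundamental polygon $P = \mathbf Q \cap \tilde\Sigma$, where $u$ is tangent to $P$ and $v$ is normal to $P$ (inside $\HH_\CC^2$, not inside the fiber).

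Next I would work with the circle bundle $\SP(\wedge^{\!2}T\HH_\CC^2)$ pulled back to $\tilde\Sigma \cong \HH_\CC^1$ and compare three framings along $\partial P$. The first is $u\wedge v$, the framing coming from the ambient geometry of the quadrangle; by the Extension Lemma it bounds over $P$, so it contributes $0$ to the relevant winding/obstruction count. The second framing is built from an oriented frame $u \wedge \tilde v$ where now $\tilde v$ is the \emph{fiber} normal, i.e. tangent to $L_\xx$ at $\xx$; the obstruction to extending this one over $P$ measures exactly $e + \chi$ (it is the Euler number of $\wedge^{\!2}(T\Sigma \oplus L) = \wedge^{\!2}TL|_\Sigma$, computed via the standard Poincaré–Hopf/obstruction argument on the orbifold $\Sigma$, using that $P$ is a fundamental domain and keeping careful track of the cone-point contributions that make $e$ and $\chi$ rational). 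The difference between the two framings over each point of $P$ — equivalently, the rotation taking the ambient normal direction $v$ to the fiber normal direction $\tilde v$ within the appropriate complex line — accounts for the factor relating these counts. Comparing, $e + \chi$ equals the winding of $u\wedge v$ relative to $u\wedge \tilde v$ along $\partial P$, which by the Extension Lemma is the winding of the trivial framing, and this winding is $\tfrac32\tau$ because $v$ is tangent to a slice (a complex geodesic) so $u\wedge v$ differs from a $\wedge_\CC^2$-type section in a way whose total rotation over the closed orbifold is the Toledo integral $\frac{4}{2\pi}\int_\Sigma\omega$. Thus $2(e+\chi) = 3\tau$.

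Concretely the steps are: (i) fix the identifications $L \simeq N\Sigma$, $\mathbf T \simeq T\Sigma$, and record $e(\wedge^{\!2}TL|_\Sigma) = e + \chi$; (ii) set up the obstruction class of a section of $\SP(\wedge^{\!2}T\HH_\CC^2)|_{\tilde\Sigma}$ over $P$ relative to a boundary framing, and identify it with $e+\chi$ for the fiberwise framing and with $0$ for the $u\wedge v$ framing (Extension Lemma); (iii) compute the difference of the two boundary framings along $\partial P$, pairing up edges of $\partial P$ identified by generators of $G$ so that the group action cancels and only a curvature term survives; (iv) recognize that surviving term as $\frac{4}{2\pi}\int_\Sigma \omega = \tau$, up to the factor $\frac32$ coming from the fact that $\wedge^{\!2}$ of the complexified $4$-real-dimensional tangent space is a \emph{complex line} on which the Kähler form acts with the stated normalization; (v) assemble $2(e+\chi) = 3\tau$, with the sign fixed by the chosen orientation of the fibering discs.

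I expect step (iii)–(iv) to be the main obstacle: making the comparison of the ambient framing $u\wedge v$ and the fiberwise framing $u\wedge\tilde v$ precise, and extracting from it exactly the Toledo integral with the correct constant, requires carefully relating the rotation of $v$ into the fiber to the Kähler form $\omega = \im\langle -,-\rangle$ along $\tilde\Sigma$, and doing the bookkeeping consistently across the edge identifications of the fundamental domain $P$ (including at the orbifold cone points, where fractional contributions to both $e$ and $\chi$ appear). The isotopy $F_t$ from Section \ref{sec: geo quadrangle} is what guarantees the geometric input — that throughout the deformation $v_t$ stays tangent to a slice and $u_t$ does not — so that the ambient framing really is the trivial one in homology; the remaining work is to turn that qualitative statement into the quantitative identity by a localization/Gauss–Bonnet computation on $\Sigma$.
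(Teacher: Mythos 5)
There is a genuine gap, and it sits at the foundation of your plan. Your step (i) records $e(\wedge^{\!2} TL|_\Sigma)=e+\chi$ from the real oriented splitting $TL|_\Sigma\simeq \mathbf T\oplus L$, and you treat the remaining work as ``pinning down orientations.'' But additivity of the first Chern number of the complex determinant line under a splitting holds only for splittings into \emph{complex} subbundles --- that is exactly the holomorphic-section argument from the introduction, and here $\mathbf T_\xx$ and $L_\xx$ are in general not complex lines. For a merely real oriented splitting the claim is false: two copies of the oriented plane bundle with Euler number $1$ sum to the trivial oriented rank-$4$ bundle over a surface (oriented rank-$4$ bundles over a surface are classified by $w_2$), which also underlies the trivial $\CC^2$ with $c_1=0\neq 2$. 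If step (i) were valid as stated, then combined with $e(\wedge^{\!2}TL|_\Sigma)=\tfrac32\tau$ it would prove the full conjecture for \emph{every} complex hyperbolic disc bundle with no quadrangle hypothesis at all --- so it cannot be where the proof ends; it is what the proof must establish.

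The mechanism that actually closes this gap, and which your steps (iii)--(iv) do not supply, is geometric: over $\partial P$ the fibers of the orbibundle are slices of the bounding bisectors, hence complex geodesics, so \emph{along the boundary} $L$ and $\mathbf T$ really are complex line subbundles and the metric $90$-degree rotations $u_2,v_2$ equal $\pm iu,\pm iv$. The Leibniz rule for the connection on $\wedge^{\!2}T\HH_\CC^2$ then gives, pointwise on $\partial P$, $\langle\nabla i\eta,\eta\rangle=\langle\nabla iu,u\rangle+\langle\nabla iv,v\rangle$ for $\eta=u\wedge v$, so the boundary integral computing $2\pi(e+\chi)$ coincides up to sign with $\oint_{\partial P} g(\nabla i\eta,\eta)$; the Extension Lemma is used precisely so that Stokes converts this last integral into the curvature integral of $\wedge^{\!2}TL|_\Sigma$ over all of $P$, which is $2\pi\cdot\tfrac32\tau$ by the cited identification. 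Your proposed alternative --- computing the relative winding of the ambient and fiberwise boundary framings and recognizing it as $\frac{4}{2\pi}\int_\Sigma\omega$ by a Gauss--Bonnet/localization argument --- is not how $\tau$ enters, and you give no reason the winding of one normal framing against another along $\partial P$ should equal a symplectic area with the constant $\tfrac32$; you correctly flag this as the main obstacle, but as set up it is not a step one could complete.
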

    \begin{proof} Consider unit vector fields $u,v$ over $P$ such that $u$ is section of $\bf T$ and $v$ is a section of $L$. Let $\eta := u \wedge v$ on $\partial P$. As consequence of the Extension Lemma \ref{extension lemma}, we can extend $\eta$ to $P$ as a section such that $\langle \eta, \eta \rangle = 1$.

    Let $v_1 =v$ and $v_2$ be the rotation of $v$ in $L$ by $90$ degrees in the counter-clockwise orientation. Do the same with respect to $T$: we take $u_1=u$ and $u_2$ is the $90$ degrees rotation of $u$ in $T$. For $\xx \in \partial P$, $L_\xx$ and ${\bf T}_\xx$ are complex lines. Thus, we either have $u_2=iu$ and $v_2 = iv$ or $u_2 = -iu$ and $v_2 = -i v$ on $\partial P$. 
    For $\eta$ as section of $\wedge^{\!\!2} T\HH_\CC^2$, the rotation of $\eta$ by $90$ degrees in the counter-clockwise direction is simply $i \eta$. 
    
    \begin{align*}
    \langle\nabla i\eta, \eta \rangle &= \langle (\nabla i u) \wedge v + u \wedge (\nabla i v), u \wedge v \rangle\\
    &=\langle (\nabla i u) \wedge v,u \wedge v\rangle  + \langle u \wedge (\nabla i v),u\wedge v \rangle\\
    &= \langle \nabla iu, u \rangle + \langle \nabla iv, v \rangle.
    \end{align*}
    Thus, taking the real part we obtain
    $$g(\nabla i\eta, \eta) =g(\nabla iu, u) + g(\nabla iv, v),$$
    and by Stokes theorem
    $$ \int_P \mathrm d g(\nabla u_2, u_1) + \int_P \mathrm d g(\nabla v_2, v_1)=\pm \int_P \mathrm d g(\nabla i\eta, \eta).$$

    Therefore, $2 e+2\chi = \pm 3\tau.$ 
    
    Note that by orienting the disc orbibundle and the discs fibering the quadrangle in a compatible way, we have $u_2=iu$ and $v_2=iv$ and, as consequence, $3\tau = 2e+2\chi$. Otherwise, we obtain $-3\tau = 2e + 2\chi$.
    \end{proof}

    \newpage
    \section{Avoiding complex points}

    Now we prove that if complex hyperbolic disc bundles admit a section with no complex tangent planes, then the equation \eqref{eq k} holds. We conjecture that all complex hyperbolic disc bundles satisfy the equation \eqref{eq k} and we believe that an adaptation of the argument below might be a way to prove such a statement. 
    
    Consider an arbitrary complex hyperbolic disc bundle $L \to \Sigma$, where now we take $\Sigma$ to be an oriented and connected closed surface with genus greater than one instead of an orbifold. We can assume that $\Sigma$ is embedded as a section of $L$. Let $N$ be the normal bundle of $\Sigma\subset L$. As bundles, $L\to \Sigma$ and $N \to \Sigma$ are isomorphic.

    The following lemma is well-known in symplectic geometry.
    \begin{lemma}\label{lagrangian section} If $\Sigma$ is an oriented, connected, closed Lagrangian surface of a $4$-dimensional complex hyperbolic manifold $L$, then $e(N)=-\chi$, where $e(N)$ is the Euler number of the normal bundle $N$ of $\Sigma$. 
    
    Additionally, whenever $L$ is a disc bundle and $\Sigma$ is a section, we have that $\tau =0$ and, as consequence, $-3|\tau| = 2e(L)+2\chi$ because the normal bundle is isomorphic to the disc bundle under such circumstance.
    \end{lemma}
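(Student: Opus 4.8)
The plan is to handle the two assertions in turn. For the first, $e(N) = -\chi$ for an oriented closed Lagrangian surface $\Sigma$ in a complex hyperbolic $4$-manifold $L$, I would use the observation already made in Remark \ref{GKL euler}: the endomorphism $v \mapsto iv$ of $TL|_\Sigma$ sends $T\Sigma$ to $N$ (because $\Sigma$ being Lagrangian means $J T_\xx\Sigma$ is the orthogonal complement of $T_\xx\Sigma$ inside $T_\xx L$ with respect to the Kähler metric, the symplectic form $\omega = \im\langle-,-\rangle$ vanishing on $T_\xx\Sigma$). This gives a real bundle isomorphism $T\Sigma \cong N$. The point is that this isomorphism is orientation-reversing: fixing a local oriented frame $e_1, e_2$ of $T\Sigma$, the induced frame $ie_1, ie_2$ of $N$ must be compared with the orientation $N$ inherits as the complex (hence oriented) normal bundle, and one checks on the model $\CC^2$ with $\Sigma = \RR^2$ that $e_1, e_2, ie_1, ie_2$ is the complex orientation of $TL$ while the ``product'' orientation coming from $T\Sigma \oplus N$ via this iso is $e_1, e_2, ie_2, ie_1$ — opposite. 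Hence $e(N) = -e(T\Sigma) = -\chi(\Sigma)$.

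For the second assertion, I would show that a Lagrangian section forces $\tau = 0$. Recall from the excerpt that $\tau = \frac{4}{2\pi}\int_\Sigma \omega$ with $\omega = \im\langle-,-\rangle$ the Kähler form of $\HH_\CC^2$. If $\Sigma$ is embedded as a Lagrangian submanifold, then by definition $\omega$ restricts to zero on $T\Sigma$, so $\int_\Sigma \omega = 0$ and $\tau = 0$ directly. (One should note the Toledo invariant is computed from any lift $\tilde\Sigma \subset \HH_\CC^2$ of the section; Lagrangianity of $\Sigma$ is equivalent to Lagrangianity of $\tilde\Sigma$, and $\omega$ is $G$-invariant, so the integral descends and equals $\int_\Sigma \omega^*$ where $\omega^*$ is the induced form on $\Sigma$, which is identically zero.) Finally, when $L$ is a disc bundle and $\Sigma$ a section, the normal bundle $N$ is isomorphic to the disc bundle $L$ as oriented rank-$2$ bundles, so $e(L) = e(N) = -\chi$; combined with $\tau = 0$ this yields $-3|\tau| = 0 = 2(-\chi) + 2\chi = 2e(L) + 2\chi$.

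The main obstacle is the sign bookkeeping in the first part: one must be scrupulous about which orientation each of $T\Sigma$, $N$, and $TL|_\Sigma$ carries (the paper fixes all of these via the complex orientation of $L$ in the opening Remark), and verify that the isomorphism $v \mapsto iv$ genuinely reverses orientation rather than preserving it. Everything else is a short computation once the model case $(\CC^2, \RR^2)$ is set up carefully. I would also remark that this sign is exactly the correction to \cite{GKL} flagged in Remark \ref{GKL euler}, so the lemma doubles as the promised second proof of that fact.
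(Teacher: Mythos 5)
Your proposal is correct, but for the first assertion it takes a genuinely different (if closely related) route from the Lemma's own proof: you use the argument the paper already sketches in Remark \ref{GKL euler}, namely that $v \mapsto iv$ is a fiberwise orientation-reversing isomorphism $T\Sigma \to N$, and then invoke the fact that the Euler number changes sign under an orientation-reversing bundle isomorphism. The paper's proof of Lemma \ref{lagrangian section} is the advertised ``other proof'': it exhibits $v_1 = iu_2$, $v_2 = iu_1$ as a \emph{positively} oriented orthonormal frame of $N$ and compares the curvature $2$-forms $\Omega_T = \mathrm{d}\, g(\nabla u_2,u_1)$ and $\Omega_N = \mathrm{d}\, g(\nabla v_2,v_1)$, which are negatives of each other because the complex structure is parallel and unitary. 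The two arguments encode the same sign; yours is shorter and purely topological, while the paper's Chern--Weil computation rehearses the frame manipulations reused in the Kalashnikov theorem. One caution on your model computation: the labels are transposed. With $e_1,e_2$ an oriented frame of $\RR^2 \subset \CC^2$, the complex orientation $e_1, ie_1, e_2, ie_2$ agrees with $e_1, e_2, ie_2, ie_1$ (an even permutation) and is \emph{opposite} to the product orientation $e_1, e_2, ie_1, ie_2$ induced by $v \mapsto iv$ (an odd permutation); you state it the other way around, but since both labels are swapped consistently your conclusion that the two orientations disagree, hence $e(N) = -e(T\Sigma) = -\chi$, survives intact. For the second assertion, your observation that $\omega|_{T\Sigma} = 0$ forces $\tau = \frac{4}{2\pi}\int_\Sigma \omega = 0$ is exactly right and in fact supplies a step the paper's proof leaves entirely implicit.
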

    \begin{proof} Let $u_1,u_2$ be a positively oriented orthonormal frame of $T\Sigma$ with respect to the Riemannian metric $g$. Since $\Sigma$ is Lagrangian, these two vector fields are orthogonal concerning the Hermitian metric as well. Thus, we have that $v_1= iu_2$, $v_2 = iu_1$ form a positively oriented orthonormal frame of $N$, because $u_1,u_2,v_1,v_2$ induces the same orientation as $u_1,iu_1,u_2,iu_2$, the natural orientation of $L$ as a complex manifold.

    The curvature for the tangent and normal bundles are given by the $2$-forms
    $$\Omega_T = d g(\nabla u_2,u_1), \quad \Omega_N = d g(\nabla v_2,v_1),$$
    from where we conclude $e(N)=-e(T\Sigma)=-\chi(\Sigma)$. 
    \end{proof}
    
    \begin{thm} If $\Sigma$ is embedded as a section and does not have complex tangent spaces, then $-3|\tau| = 2e+2\chi$.
    \end{thm}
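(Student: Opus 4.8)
The plan is to recognise that ``having no complex tangent planes'' is precisely the condition that $T\Sigma$ is a \emph{totally real} subbundle of $TL|_\Sigma$, and that this single property already forces $\tau=0$ and $e=-\chi$; the statement then becomes a mild strengthening of Lemma~\ref{lagrangian section}. For the reformulation, observe that a real $2$-plane $\Pi\subset T_\xx L$ is either a complex line or satisfies $\Pi\cap i\Pi=0$: if $0\neq v\in\Pi\cap i\Pi$, then $v$ and $iv$ both lie in $\Pi$, so $\Pi=\Span_\RR(v,iv)$ is $i$-invariant. Hence the hypothesis on $\Sigma$ is exactly that $T_\xx\Sigma\oplus i(T_\xx\Sigma)=T_\xx L$ for all $\xx\in\Sigma$, equivalently that the canonical $\CC$-linear bundle map $T\Sigma\otimes_\RR\CC\to TL|_\Sigma$, $v\otimes(a+bi)\mapsto av+b\,iv$, is an isomorphism of complex vector bundles.

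First I would dispose of the Toledo invariant. Applying $\wedge^2_\CC$ to the isomorphism $TL|_\Sigma\cong T\Sigma\otimes_\RR\CC$ gives $\wedge^2 TL|_\Sigma\cong(\wedge^2_\RR T\Sigma)\otimes_\RR\CC$. Since $\Sigma$ is oriented, $\wedge^2_\RR T\Sigma$ is a trivial real line bundle, so its complexification is trivial as a complex line bundle; and since $\tfrac32\tau$ is the first Chern number of $\wedge^2 TL|_\Sigma$, this yields $\tfrac32\tau=0$, i.e.\ $\tau=0$. (A posteriori $\int_\Sigma\omega=0$, even though $\omega|_\Sigma$ need not vanish pointwise as it does in the Lagrangian case.)

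Next I would compute the Euler numbers. Set $\nu:=TL|_\Sigma/T\Sigma$, oriented so that, together with the orientation of $T\Sigma$, it recovers the complex orientation of $TL|_\Sigma$; then $\nu$ is isomorphic, as an oriented vector bundle, to $N$ and hence to $L$, so $e(\nu)=e$. Define $\phi\colon T\Sigma\to\nu$ by $\phi(v)=[iv]$. If $\phi_\xx(v)=0$ with $v\neq0$, then $iv\in T_\xx\Sigma$, so $T_\xx\Sigma=\Span_\RR(v,iv)$ would be a complex tangent plane, which is excluded; thus $\phi$ is an isomorphism of oriented rank-$2$ bundles over the connected surface $\Sigma$. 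It therefore preserves or reverses orientation uniformly, and which of the two happens depends only on the $\GL(2,\CC)$-orbit of the totally real plane $T_\xx\Sigma$ — of which there is only one — so it can be read off from the model $T_\xx\Sigma=\RR^2\subset\CC^2$, where a short computation, identical to the one underlying Lemma~\ref{lagrangian section}, shows that $\phi\colon v\mapsto[iv]$ \emph{reverses} the quotient orientation of $\nu=\CC^2/\RR^2$. Hence $e=e(\nu)=-e(T\Sigma)=-\chi$.

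Combining the two computations gives $2e+2\chi=2(-\chi)+2\chi=0=-3|\tau|$, which is \eqref{eq k}. The only step that requires genuine care is the orientation bookkeeping for $\phi$, namely making the sign agree with the convention that $T\Sigma\oplus N$ carries the complex orientation of $TL|_\Sigma$; but this is exactly the explicit computation in $\CC^2$ that already appears in the proof of Lemma~\ref{lagrangian section}, and the rest of the argument is formal.
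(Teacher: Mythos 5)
Your proof is correct, but it takes a genuinely different route from the paper's. The paper punctures $\Sigma$ at a point $\pp$ where $\omega|_{T_\pp\Sigma}\neq 0$, uses the hypothesis to build unit frames $u$ of $T\Sigma'$ and $v$ of $N|_{\Sigma'}$ that are Hermitian-orthogonal, and compares the winding numbers of $u$, $v$, and $u\wedge v$ around $\pp$ against local frames $e_1,e_1',e_2,e_2'$, obtaining $\tfrac32\tau=\pm(e+\chi)$ with the sign controlled by the sign of $\omega(e_1,e_1')$ at $\pp$; the two cases are then reconciled by choosing $\pp$ according to the sign of $\tau$. You instead observe that the hypothesis says exactly that $T\Sigma$ is a totally real subbundle of $TL|_\Sigma$, so that $TL|_\Sigma\cong T\Sigma\otimes_\RR\CC$ as complex vector bundles; triviality of $\wedge^{\!2}_\CC\left(T\Sigma\otimes_\RR\CC\right)$ then gives $\tau=0$, and the orientation-reversing isomorphism $v\mapsto[iv]$ onto the normal bundle gives $e=-\chi$ (this is the classical Lai--Webster count of complex points, specialized to the case where there are none). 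Your argument is shorter, avoids the puncture and winding-number bookkeeping entirely, and proves strictly more than the stated theorem: not merely $-3|\tau|=2e+2\chi$ but the individual identities $\tau=0$ and $e=-\chi$, so a section free of complex tangent planes can only exist when the Toledo invariant vanishes --- which also makes Lemma \ref{lagrangian section} a special case rather than just an auxiliary step. The only point requiring care is the sign of $v\mapsto[iv]$, which you correctly reduce to the single model $\RR^2\subset\CC^2$ via local constancy of the sign and connectedness of $\GL(2,\CC)$; that computation is the same one underlying Lemma \ref{lagrangian section} and does come out orientation-reversing under the paper's convention that $T\Sigma\oplus N$ carries the complex orientation of $TL|_\Sigma$.
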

    \begin{proof} By Lemma \ref{lagrangian section}, we may assume there exists a point $\pp$ such that $\omega|_{T_\pp \Sigma} \neq 0$.

    Since the punctured surface $\Sigma':=\Sigma\setminus \{\pp\}$ is homotopically equivalent to a graph, every rank $2$ oriented real vector bundle is trivial over it. Thus, $T\Sigma'$ is a trivial bundle and we can consider a tangent unit vector field $u$ over $\Sigma'$. 
    
    The morphism of bundles
    $$\lambda: N|_{\Sigma'} \to \RR$$
    given by $(\xx,v) \mapsto \omega(u(\xx),v)$ has constant rank $1$. Indeed, if for some $\xx \in \Sigma'$  we have $\omega(u(\xx),v)=0$ for all $v \in N$, then $N_\xx = u(\xx)^\perp$ with respect to the Hermitian form. Since $T\Sigma$ and $N$ are orthogonal, that would imply that $T_\xx \Sigma = \CC u(\xx)$, contradicting the hypothesis of the theorem. 

    Therefore, we have a line bundle given by $\ker \lambda \subset N$. Inside $N$ we consider the line bundle $J \to \Sigma'$, orthogonal to $\ker \lambda$ with respect to the Riemannian metric $g$. The line bundle $J$ is trivial since for each $n \in J_\xx$ we have $\lambda(n)>0$ or $\lambda(n)<0$. Consider a section $n$ for $J \to \Sigma'$ such that $|n|=1$ and $\omega(u(\xx),n(\xx))>0$. By rotating $n$ by $90$ degrees in the counterclockwise direction, we obtain a section $v$ for $\ker \lambda$ with $|v| =1$.

    Thus, we have the unit vector fields $u,v$ with $\omega(u,v) = 0$. This means that $\langle u, v \rangle = 0$, that is, $u,v$ are orthogonal with respect to the Hermitian metric.

    Consider two unit vector fields $e_1,e_2$ defined in a neighborhood of $\pp$ with $e_1$ tangent to $\Sigma$, $e_2$ normal to $\Sigma$, and $\langle e_1,e_2 \rangle = 0$. These vector fields exist by the same argument we did for $u$ and $v$.

    Let $e_1'$ be the rotation of $e_1$ by $90$ degrees in the counter-clockwise direction at $T\Sigma$, and $e_2'$ the rotation of $e_2$ by $90$ degrees in the counter-clockwise direction at $N$. Thus, the local frames $e_1,e_1'$ and $e_2,e_2'$ are positively oriented in $T\Sigma$ and $N$.

    Consider a small closed disc $D$ centered at $\pp$ where the sections $e_1,e_1',e_2,e_2'$ are defined. We may assume that $\omega(e_1,e_1')$ does not vanishes over $D$. Consider as well a covering map $g:\RR \to \partial D$ with period~$1$. 

    From the map $\partial D \to \SP^1$ given by $\xx \to \left(g(u(\xx),e_1(\xx)),g(u(\xx),e_1'(\xx))\right)$, there exists a map $\theta: \RR \to \RR$ such that 
    $$u(t) = \cos(\theta(t)) e_1+\sin(\theta(t)) e_1',$$
    where we abbreviate $u(g(t))$ as $u(t)$.
    Similarly, we have the map $\phi:\RR \to \RR$ such that
    $$v(t) = \cos(\phi(t)) e_2+\sin(\phi(t)) e_2'.$$

    In the same style as the Poincaré-Hopf theorem, we have
    $$\chi= e(T\Sigma)=\frac{\theta(1)-\theta(0)}{2\pi},\quad  e(N) = \frac{\phi(1)-\phi(0)}{2\pi}.$$

    Now observe that,
    $$u(t) \wedge v(t) = \left(\cos\left(\theta(t)+\phi(t)\right) - i\omega (e_1(t),e_1'(t))\sin\left(\theta(t)+\phi(t)\right) \right) e_1 \wedge e_2$$
    and, therefore, the Euler number of $\wedge^{\!2} TL$ is 
    $e+\chi$ if $\omega(e_1,e_1')<0$, and $-(e+\chi)$ if $\omega(e_1,e_1')>0$.

    If $\tau\geq 0$, we can take $\pp$ for which $\omega(e_1,e_1')>0$, thus 
    $$\frac32\tau=-e-\chi.$$
    On the other hand, if $\tau<0$, we take $\pp$ for which $\omega(e_1,e_1')<0$ and we obtain
    $$\frac32\tau=e+\chi.$$

    Therefore, $-3|\tau| = 2e+2\chi$.

    \end{proof}
	\newpage

\end{document}